\newenvironment{proof}{{\bf Proof:  }}{\hfill\rule{2mm}{2mm}}
\newcommand{\junk}[1]{}
\newtheorem{theorem}{Theorem}
\newtheorem{lemma}[theorem]{Lemma}
\newtheorem{conjecture}[theorem]{Conjecture}
\newtheorem{claim}[theorem]{Claim}
\newtheorem{corollary}[theorem]{Corollary}
\newtheorem{definition}[theorem]{Definition}
\newcommand{\Comb}{\ensuremath{\textsc{comb}}}
\newcommand{\supp}{\ensuremath{\text{supp}}}
\title{On the Adjacency Spectra of Hypertrees}
\author{Gregory J. Clark and Joshua N. Cooper\\
\small Department of Mathematics\\[-0.8ex]
\small University of South Carolina\\
}
\begin{document}
\maketitle

\begin{abstract}
We extend the results of \cite{Zha} to show that $\lambda$ is an eigenvalue of a $k$-uniform hypertree $(k \geq 3)$ if and only if it is a root of a particular matching polynomial for a connected induced subtree.  We then use this to provide a spectral characterization for power hypertrees.  Notably, the situation is quite different from that of ordinary trees, i.e., $2$-uniform trees.  We conclude by presenting an example (an $11$ vertex, $3$-uniform non-power hypertree) illustrating these phenomena.
\end{abstract}

{\bf Keywords}: Hypergraph; Characteristic Polynomial; Matching Polynomial; Power Graph.

\section{Introduction}

The following beautiful result was shown in \cite{Zha}: the set of roots of a certain matching polynomial of a $k$-uniform hypertree (an acyclic $k$-uniform hypergraph) is a subset of its homogeneous adjacency spectrum.  

\begin{theorem} 
\label{t:main} (\cite{Zha})
$\lambda$ is a nonzero eigenvalue of a hypertree $H$ with the corresponding eigenvector ${\bf x}$ having all elements nonzero if and only if it is a root of the polynomial \[\varphi(H) = \sum_{i=0}^m (-1)^i |{\cal M}_i| x^{(m-i)r}\] where ${\cal M}_i$ is the collection of all $t$-matchings of $H$. 
\end{theorem}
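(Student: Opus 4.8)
The plan is to root the hypertree, eliminate the eigenvector coordinates from the leaves inward to collapse the tensor eigenvalue equation into a scalar recursion, and then match that recursion against the edge-deletion recursion for $\varphi$. Write $r$ for the uniformity, $\A$ for the adjacency tensor, and for a hypergraph $G$ with $e(G)$ edges put $\varphi(G)=\sum_i(-1)^i|\mathcal M_i(G)|x^{(e(G)-i)r}$, which is multiplicative over disjoint unions. Fix a root $\rho$ of $H$, orient every edge away from $\rho$ so that each non-root $v$ has a unique parent edge and a subtree $H_v$ below it, and suppose $\x$ is nowhere-zero with $(\A\x^{r-1})_v=\lambda x_v^{r-1}$ for all $v$, $\lambda\neq0$. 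Processing vertices from the leaves inward and writing $\beta_vx_v^{r-1}$ for the part of $(\A\x^{r-1})_v$ coming from the downward edges at $v$: on a downward edge $e=\{v\}\cup C$ the $r-1$ coordinate equations at the vertices $u\in C$ read $x_v\prod_{u'\in C\setminus u}x_{u'}=(\lambda-\beta_u)x_u^{r-1}$, and multiplying the $u$-th by $x_u$ shows $(\lambda-\beta_u)x_u^{r}$ is independent of $u$, whence (taking the product over $u\in C$, everything nonzero) the downward contribution of $e$ at $v$ is $\prod_{u\in C}x_u=x_v^{r-1}/\prod_{u\in C}(\lambda-\beta_u)$. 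Summing over downward edges,
\[
\beta_v=\sum_{e\text{ downward at }v}\ \prod_{u\in e\setminus v}(\lambda-\beta_u)^{-1},\qquad\beta_v=0\text{ for a leaf }v;
\]
the nowhere-zero hypothesis forces $\lambda-\beta_u\neq0$ at every non-root $u$, and the equation at $\rho$ reads $\beta_\rho=\lambda$. Conversely, from any everywhere-defined solution of this recursion with $\lambda-\beta_u\neq0$ ($u\neq\rho$) and $\beta_\rho=\lambda$ one rebuilds a nowhere-zero eigenvector top-down, choosing at each edge $r$-th roots over $\C$ (pinning one down with an $r$-th root of unity) so the products close up; so $\lambda\neq0$ admits a nowhere-zero eigenvector iff the recursion runs to completion and closes at the root.

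Next I would translate this into matching polynomials. Set $\gamma_v:=\lambda-\beta_v$ and prove by induction on $|V(H_v)|$ the polynomial identity (evaluated at $x=\lambda$, valid whenever the recursion is everywhere defined)
\[
\gamma_v\cdot x^{\,r\deg_{H_v}(v)-1}\,\varphi(H_v-v)=\varphi(H_v),
\]
the leaf case holding since then $\varphi(H_v)=\varphi(H_v-v)=1$ and $r\deg_{H_v}(v)-1=-1$. The inductive step combines multiplicativity ($\varphi(H_v-v)=\prod_u\varphi(H_u)$ over the children $u$ of $v$, and $\varphi(H_v-V(e))$ factoring over the components of $H_v-V(e)$) with the edge-expansion identity got by sorting matchings of $H_v$ by whether, and through which edge, they cover $v$:
\[
\varphi(H_v)=x^{\,r\deg_{H_v}(v)}\varphi(H_v-v)-\!\!\sum_{e\text{ downward at }v}\!\! x^{\,r\left(\deg_{H_v}(v)-1+\sum_{u\in e\setminus v}\deg_{H_u}(u)\right)}\varphi\big(H_v-V(e)\big),
\]
substituting the inductive expression for each $\gamma_u$; here the exponents match exactly because $r=k$. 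At $v=\rho$, where $\varphi(H_\rho)=\varphi(H)$, the closing condition $\gamma_\rho=0$ together with $\lambda\neq0$ forces $\varphi(H)|_{x=\lambda}=0$ — this is the ``only if'' direction, since a nowhere-zero eigenvector makes the recursion everywhere defined.

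The main obstacle is the converse. From $\varphi(H)|_{x=\lambda}=0$ and $\lambda\neq0$ one must show the recursion actually runs to completion with $\gamma_v\neq0$ at non-root $v$; by the identity, running-to-completion of $\gamma_v$ is the assertion $\varphi(H_v-v)|_{x=\lambda}\neq0$ and nondegeneracy is $\varphi(H_v)|_{x=\lambda}\neq0$, and since the components of $H_v$ and of $H_v-v$ are proper connected induced sub-hypertrees of $H$, both reduce to the single claim that no proper connected induced sub-hypertree of $H$ has matching polynomial vanishing at $\lambda$. This is precisely where the statement is delicate: a single edge sits inside every larger hypertree, and $\varphi(H)$ and $\varphi$ of a sub-hypertree can share nonzero roots, so I expect the correct reading to identify the nowhere-zero eigenvalues with those roots of $\varphi(H)$ that are \emph{not} roots of $\varphi$ of any proper connected induced sub-hypertree. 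I would try to prove the needed coprimality by a minimal-subtree argument driven by the edge-expansion identity, using $\lambda\neq0$ and $k\geq3$ to control the powers of $x$; granting it, the rest is bookkeeping with the matching recursion and with $r$-th roots.
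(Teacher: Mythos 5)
First, a point of orientation: the paper does not prove Theorem \ref{t:main} at all --- it is imported verbatim from \cite{Zha} --- so there is no in-paper argument to compare yours against; your proposal has to be judged against the theorem it claims to prove. Your ``only if'' direction is sound and is the natural transfer-type argument: the computation showing that $(\lambda-\beta_u)x_u^k$ is constant over $u\in e\setminus v$, hence that the downward contribution of $e$ at $v$ is $x_v^{k-1}/\prod_{u}(\lambda-\beta_u)$, is correct; the recursion for $\beta_v$ is forced leaf-to-root by any totally nonzero eigenvector; and your identity $\gamma_v\,x^{r\deg_{H_v}(v)-1}\varphi(H_v-v)=\varphi(H_v)$ checks out against the edge-expansion of the matching polynomial, with the exponents balancing precisely because $r=k$ (your use of the edge count $e(G)$ in place of the paper's matching number $m$ in the exponent of $\varphi$ only rescales by a power of $x$ and is harmless for nonzero roots). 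The top-down reconstruction of a nowhere-zero eigenvector from an everywhere-defined, everywhere-nonzero run of the recursion, with the $k$-th-root-of-unity adjustment to close each edge product, is also fine.

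The genuine gap is the converse, and you have located it exactly but not closed it. Since the $\beta_v$ are forced by \emph{any} totally nonzero eigenvector, a totally nonzero eigenpair for $\lambda$ exists only if $\gamma_u\neq 0$ for every non-root $u$, which by your identity is equivalent to $\varphi(H_u)(\lambda)\neq 0$ for every proper hanging subtree $H_u$ (for every choice of root). So the stated theorem is \emph{equivalent} to the assertion that a nonzero root of $\varphi(H)$ is never a root of $\varphi$ of any proper hanging sub-hypertree when $k\geq 3$; this non-vanishing statement is the entire content of the ``if'' direction and is precisely what distinguishes $k\geq 3$ from $k=2$ (where it fails, e.g.\ for repeated tree eigenvalues). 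You neither prove it nor believe it: your closing suggestion that the theorem should be weakened to roots of $\varphi(H)$ that avoid all sub-hypertree matching polynomials is a retreat from the claim rather than a proof of it, and it contradicts the (published) statement you were asked to establish. Until the coprimality lemma is supplied --- say by showing that the rational functions $\gamma_v(\lambda)$ have no zeros or poles at nonzero roots of $\varphi(H)$, via the structure of $\varphi$ as a polynomial in $y=x^k$ --- the proposal proves only one implication.
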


In the present work, we show how to obtain {\em all} of the eigenvalues of a hypertree, and use this description to give a spectral characterization of ``power'' hypertrees (defined below).  Here the notion of a hypergraph's eigenpairs is the homogeneous adjacency spectrum, \`{a} la Qi \cite{Qi}, Lim \cite{Lim}, and Cooper-Dutle \cite{Coo}.  We extend Theorem \ref{t:main} as follows to describe the spectrum of a hypertree, answering the main open question in \cite{Zha}.

\begin{theorem}
\label{t: main2}
Let ${\cal H}$ be a $k$-uniform hypertree, for $k \geq 3$; $\lambda$ is a nonzero eigenvalue of  ${\cal H}$ if and only if there exists an induced subtree $H \subseteq {\cal H}$ such that $\lambda$ is a root of the polynomial $\varphi(H)$.  
\end{theorem}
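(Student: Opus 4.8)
The plan is to prove both implications by passing back and forth between eigenvectors of $\mathcal{H}$ and eigenvectors of its induced subtrees, using Theorem \ref{t:main} as the bridge in each direction. The one structural fact I would isolate first as a lemma is this: \emph{if $H$ is an induced subtree of $\mathcal{H}$ and $e\in E(\mathcal{H})\setminus E(H)$, then $|e\cap V(H)|\le 1$.} The proof is short: if $a,b$ were two distinct vertices of $e$ lying in $V(H)$, then the unique $a$--$b$ path in the tree $H$, concatenated with the edge $e$, would be a Berge cycle in $\mathcal{H}$, contradicting acyclicity. This lemma is what makes the ``extend by zero'' construction below work, and it is exactly where the hypothesis $k\ge 3$ is spent.

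For the ``if'' direction, suppose $H\subseteq\mathcal{H}$ is an induced subtree and $\lambda\ne 0$ is a root of $\varphi(H)$. By Theorem \ref{t:main} applied to $H$, there is an eigenpair $(\lambda,\mathbf{y})$ of $H$ with every coordinate of $\mathbf{y}$ nonzero. Extend $\mathbf{y}$ to $\mathbf{x}$ on $V(\mathcal{H})$ by setting $x_v=0$ for $v\notin V(H)$, and verify the eigenvalue equation $\lambda x_v^{k-1}=\sum_{e\ni v}\prod_{u\in e\setminus v}x_u$ at each vertex. At $v\in V(H)$: any edge $e\ni v$ with $e\notin E(H)$ has a vertex outside $V(H)$ (which lies in $e\setminus v$ since $v\in V(H)$), so that term vanishes, and the surviving terms reproduce the eigenvalue equation of $H$ at $v$. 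At $v\notin V(H)$: the left side is $0$, and for any edge $e\ni v$ we have $e\notin E(H)$, so by the lemma $|e\cap V(H)|\le 1$; since $|e\setminus v|=k-1\ge 2$, the set $e\setminus v$ contains a vertex outside $V(H)$ and the term is $0$. Hence $\mathbf{x}$ is a nonzero eigenvector of $\mathcal{H}$ for $\lambda$.

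For the ``only if'' direction, let $(\lambda,\mathbf{x})$ be an eigenpair of $\mathcal{H}$ with $\lambda\ne 0$ and let $S=\supp(\mathbf{x})$. Since $\mathcal{H}$ is acyclic, the induced sub-hypergraph $\mathcal{H}[S]$ is a disjoint union of subtrees; let $H$ be any one of its connected components, so that $H=\mathcal{H}[V(H)]$ is an induced subtree of $\mathcal{H}$. First, $H$ has at least one edge, since an isolated vertex $v\in S$ would force $\lambda x_v^{k-1}=0$, impossible. Next, for $v\in V(H)$ the eigenvalue equation in $\mathcal{H}$ at $v$, after discarding every term containing a factor $x_u=0$, becomes a sum over edges $e\ni v$ with $e\subseteq S$; each such edge is incident to $v\in V(H)$ and hence belongs to the component $H$. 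Therefore the restriction $\mathbf{x}|_{V(H)}$ satisfies the eigenvalue equations of $H$ and is nowhere zero, so by Theorem \ref{t:main}, $\lambda$ is a root of $\varphi(H)$, as required.

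I expect the only real obstacle, beyond this bookkeeping, to be stating the structural lemma correctly under the paper's precise definitions of hypertree and acyclicity and making the Berge-cycle argument airtight (for instance, confirming the relevant notion excludes two edges meeting in more than one vertex); once that is nailed down, everything else is a careful but routine translation between the homogeneous eigenvalue equations of $\mathcal{H}$ and those of its subtrees. It is also worth remarking explicitly why the argument breaks for $k=2$: when $|e\setminus v|=1$, an edge joining a vertex outside $V(H)$ to a single vertex of $V(H)$ leaves a nonvanishing term, so extension by zero fails — matching the statement's restriction to $k\ge 3$.
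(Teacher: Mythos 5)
Your proposal is correct, and the two directions deserve separate comments. Your ``only if'' direction is essentially the paper's argument: the paper proves Lemma \ref{l: supp} (restriction to the support is a totally nonzero eigenpair of $\mathcal{H}[\supp(\mathbf{x})]$) and then handles disconnectedness by invoking Theorem \ref{t: subgraph} and Claim \ref{c: varphi} before applying Theorem \ref{t:main} to a component; your version, which restricts directly to a single connected component and checks that the surviving terms of the eigenvalue equation are exactly the edges of that component, is the same idea executed a little more cleanly. Your ``if'' direction, however, is genuinely different. The paper gets this implication by citing Theorem \ref{t: tree} of \cite{Zha} (every sub-hypertree's matching polynomial contributes eigenvalues of $\mathcal{H}$), with Claim \ref{c: tree} supplying the pendant-edge peeling that connects an induced subtree to that theorem's hypotheses; you instead reprove the needed special case from scratch by taking the totally nonzero eigenvector on $H$ guaranteed by Theorem \ref{t:main}, extending it by zero to $V(\mathcal{H})$, and verifying the homogeneous eigenvalue equation vertex by vertex. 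The structural lemma you isolate --- that an edge of $\mathcal{H}$ not contained in an induced subtree $H$ meets $V(H)$ in at most one vertex, by acyclicity --- is exactly what makes every term vanish at vertices outside $V(H)$ (each such edge contributes at least $k-2\ge 1$ zero factors to $e\setminus v$), and it pinpoints precisely where $k\ge 3$ is spent, which the paper leaves implicit by deferring to \cite{Zha} and \cite{Zho}. What your route buys is a self-contained, eigenvector-level proof whose only external input is Theorem \ref{t:main}; what the paper's route buys is brevity and a direct link to the machinery of \cite{Zha} (at the cost of leaning on Claim \ref{c: tree}, whose inductive step is only sketched there). Both arguments are sound.
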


We note that Theorem \ref{t: main2} is not true for (2-uniform) trees (c.f.~Cauchy's interlacing theorem), making it an unusual example of a result in spectral hypergraph theory that fails in the graph case.  The necessity for $k \geq 3$ is established by Theorem \ref{t: tree} (\cite{Zha}, \cite{Zho}). 

Below, we reserve the use of the term ``hypergraph'' for the case of $k$-uniform hypergraphs with $k\geq 3$, and use the language of ``graphs'' exclusively for $k=2$.  We also make use of the nomenclature $k$-graph and $k$-tree to mean $k$-uniform hypergraph and $k$-uniform hypertree, respectively. We maintain much of the notation of \cite{Zha} and refer the interested reader to their paper.  In the next section, we provide necessary definitions and prove Theorem \ref{t: main2}.  We then use this result to show that power trees are characterized by their spectra being cyclotomic (Theorem \ref{t: main3}) and provide an example demonstrating these phenomena.
\section{Proof of Theorem \ref{t: main2}}

A vector is \emph{totally nonzero} if each coordinate is nonzero and an eigenpair $(\lambda, {\bf x})$ is totally nonzero if $\lambda \neq 0$ and ${\bf x}$ is a totally nonzero vector. Given a vector ${\bf x} \in \mathbb{C}^n$ the \emph{support} $\supp({\bf x})$ is the set of all indices of non-zero coordinates of ${\bf x}$.  Let ${\bf x}^\circ$ denote the totally nonzero projection (by restriction) of ${\bf x}$ onto $\mathbb{C}^{|\supp({\bf x})|}$. For ease of notation, we assume that the coordinate indices of vectors agree with the vertex labeling of the hypergraph under consideration.  We denote the induced subgraph of $\cal H$ on $U \subseteq V(\mathcal{H})$ by \[{\cal H}[U] = (U, \{v_1\dots v_k \in E(\mathcal{H}) : v_i \in U\})\] and write $H \sqsubseteq \mathcal{H}$ to mean $H = \mathcal{H}[U]$ for some $U \subseteq V(\mathcal{H})$.

The following establishes the forward direction of Theorem \ref{t: main2}.

\begin{lemma}
\label{l: supp}
Let $(\lambda, {\bf x})$ be a nonzero eigenpair of the normalized adjacency matrix of a $k$-uniform hypergraph ${\cal H}$.  Then $(\lambda, {\bf x}^\circ)$ is a totally nonzero eigenpair of ${\cal H}[\supp({\bf x})]$.
\end{lemma}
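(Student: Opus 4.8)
The plan is to exploit the fact that the homogeneous (polynomial) adjacency eigenequations of a hypergraph are \emph{local} at each vertex, and that any monomial involving a zero coordinate drops out, so that the equations at the support vertices become, verbatim, the eigenequations of the induced subhypergraph on the support.

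First I would set $S = \supp({\bf x})$ and record two immediate facts: $S \neq \emptyset$, since an eigenvector is by definition nonzero, and ${\bf x}^\circ$ is totally nonzero, by the very definition of $\supp$; moreover $\lambda \neq 0$ because the eigenpair is assumed nonzero. Hence ``totally nonzero eigenpair'' will follow as soon as I verify that $(\lambda, {\bf x}^\circ)$ satisfies the defining eigenequations of ${\cal H}[S] = {\cal H}[\supp({\bf x})]$.

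Next I would recall that the $v$-th eigenequation of the (normalized) adjacency tensor of a $k$-uniform hypergraph ${\cal H}$ reads $\sum_{e \in E({\cal H}),\, v \in e} \prod_{u \in e \setminus\{v\}} x_u = \lambda x_v^{k-1}$. Fix $v \in S$. In the left-hand sum, any edge $e$ not contained in $S$ contains some $u$ with $x_u = 0$ and therefore contributes $0$; so the sum effectively ranges only over edges $e$ with $v \in e \subseteq S$, which are exactly the edges of ${\cal H}[S]$ incident to $v$. Since the adjacency tensor of an induced subhypergraph is precisely the restriction of that of ${\cal H}$ (the normalizing constant $1/(k-1)!$ on each surviving edge is unchanged), the displayed identity is exactly the $v$-th eigenequation of ${\cal H}[S]$ evaluated at ${\bf x}^\circ$. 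As $v \in S = V({\cal H}[S])$ was arbitrary, this shows $(\lambda, {\bf x}^\circ)$ is an eigenpair of ${\cal H}[S]$, which together with the first step completes the argument.

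Since the proof is essentially bookkeeping, I do not anticipate a real obstacle; the single point that deserves attention is the interplay between $\lambda \neq 0$ and the structure of $S$: because ${\bf x}^\circ$ is totally nonzero and $\lambda \neq 0$, the right-hand side $\lambda x_v^{k-1}$ is nonzero for every $v \in S$, which forces each support vertex to lie in at least one edge fully contained in $S$. Thus ${\cal H}[S]$ has no isolated vertices among $S$ and the restricted eigenequations are non-degenerate --- automatic here, but worth noting, since it is exactly what guarantees that the support restriction is a genuine eigenpair and not a vacuous one. (I would also remark in passing that this is where $k \geq 3$ versus $k = 2$ is irrelevant: the lemma holds for all $k$, and the hypertree hypothesis is only used later.)
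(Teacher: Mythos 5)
Your proposal is correct and follows essentially the same route as the paper's proof: both observe that every monomial in the $v$-th eigenequation involving a coordinate outside $\supp({\bf x})$ vanishes, so the equations at support vertices restrict verbatim to the eigenequations of ${\cal H}[\supp({\bf x})]$ evaluated at ${\bf x}^\circ$, which is totally nonzero by construction. Your additional remark that $\lambda \neq 0$ forces each support vertex to lie in an edge contained in the support is a harmless (and accurate) bonus not present in the paper.
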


\begin{proof}
As $(\lambda, {\bf x})$ is an eigenpair of ${\cal H}$,
\[
\sum_{i_2. i_2, \dots, i_k=1}^n a_{ji_2i_3\dots i_k}x_{i_2}x_{i_3}\dots x_{i_k} = \lambda x_j^{k-1}
\] 
for $j \in [n]$ by definition.  Let $m = |\supp({\bf x})|$ and suppose without loss of generality that $\supp({\bf x}) = [m]$.  For $j \in [m]$ we have
\begin{align*}
\lambda (x^\circ)_j^{k-1} &= \lambda x_j^{k-1} \\
&= \sum_{i_2. i_2, \dots, i_k=1}^n a_{ji_2i_3\dots i_k}x_{i_2}x_{i_3}\dots x_{i_k} \\
&= \sum_{i_2. i_2, \dots, i_k=1}^m a_{ji_2i_3\dots i_k}x_{i_2}x_{i_3}\dots x_{i_k}. \\
\end{align*}
Thus, $(\lambda, {\bf x}^\circ)$ is an eigenpair of ${\cal H}[m]$ by definition; moreover, $(\lambda, {\bf x}^\circ)$ is totally nonzero, as each coordinate of ${\bf x}^\circ$ is nonzero by construction. 
\end{proof}

The following result, from \cite{Coo}, shows how the eigenvalues of a disconnected hypergraph arise from the eigenvalues of its components.

\begin{theorem}(\cite{Coo})
\label{t: subgraph}
Let $H$ be a $k$-graph that is the disjoint union of hypergraphs $H_1$ and $H_2$.  Then as sets, $spec(H) = spec(H_1) \cup spec(H_2)$.
\end{theorem}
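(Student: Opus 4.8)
The plan is to prove the two set inclusions directly, working with eigenpairs rather than with characteristic polynomials; this is legitimate because the claim concerns only the underlying sets. The starting point is the standard fact (see \cite{Qi}, \cite{Coo}) that, for a $k$-graph $G$, one has $\spec(G)\ni\lambda$ if and only if the system of eigen-equations
\[
\sum_{i_2,\dots,i_k} a_{j i_2 \dots i_k}\, y_{i_2}\cdots y_{i_k} = \lambda\, y_j^{k-1}, \qquad j \in V(G),
\]
(the same equations that appear in the proof of Lemma~\ref{l: supp}) admits a nonzero solution $\mathbf y \in \C^{V(G)}$ — equivalently, $\lambda$ is a root of the resultant that defines the characteristic polynomial. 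This reduces the theorem to a purely combinatorial statement about eigenpairs, the key structural input being that no edge of $H$ meets both $H_1$ and $H_2$.

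For the inclusion $\spec(H_1)\cup\spec(H_2)\subseteq\spec(H)$, I would take a nonzero eigenpair $(\lambda,\mathbf y)$ of, say, $H_1$ and let $\mathbf x\in\C^{V(H)}$ be the extension of $\mathbf y$ by zeros on $V(H_2)$. For $j\in V(H_1)$ every edge of $H$ incident to $j$ lies inside the component $H_1$, so the $j$-th eigen-equation for $H$ collapses to the $j$-th eigen-equation for $H_1$, which holds by hypothesis. For $j\in V(H_2)$ every edge incident to $j$ lies inside $H_2$ and hence involves only coordinates on which $\mathbf x$ vanishes, so both sides of the $j$-th eigen-equation are $0=\lambda\cdot 0^{k-1}$. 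Thus $(\lambda,\mathbf x)$ is a nonzero eigenpair of $H$, giving $\lambda\in\spec(H)$; the case of $H_2$ is identical.

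For the reverse inclusion $\spec(H)\subseteq\spec(H_1)\cup\spec(H_2)$, I would take a nonzero eigenpair $(\lambda,\mathbf x)$ of $H$ and let $\mathbf x_i$ denote the restriction of $\mathbf x$ to $V(H_i)$. Since no edge of $H$ crosses between the components, for each $j\in V(H_i)$ the $j$-th eigen-equation for $H$ is exactly the $j$-th eigen-equation for $H_i$ evaluated at $\mathbf x_i$; hence $\mathbf x_i$ solves the eigen-equations of $H_i$ with the same $\lambda$. As $\mathbf x\neq 0$, at least one of $\mathbf x_1,\mathbf x_2$ is nonzero, and for that index $i$ we obtain $\lambda\in\spec(H_i)$. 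Combining the two inclusions yields $\spec(H)=\spec(H_1)\cup\spec(H_2)$.

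The computations here are routine, so there is no serious obstacle at the level of set equality; the one step I would be careful about is the opening reduction — confirming that ``root of the characteristic polynomial'' and ``admits a nonzero eigenvector'' determine the same set of $\lambda$, which is a property of the resultant. If one instead wanted the finer multiset identity (which \cite{Coo} in fact proves), the natural route is the polynomial factorization $\phi_H = \phi_{H_1}^{(k-1)^{|V(H_2)|}}\,\phi_{H_2}^{(k-1)^{|V(H_1)|}}$ (up to the usual normalizing constant), consistent with the degree count $\deg\phi_G=|V(G)|(k-1)^{|V(G)|-1}$; that argument is still bookkeeping with the resultant but is genuinely more delicate, and it is not needed for the set-level statement asserted here.
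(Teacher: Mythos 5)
The paper does not actually prove this statement --- it is imported verbatim from \cite{Coo} --- so the relevant comparison is with the proof given there. Your argument is correct and genuinely more elementary: you work directly with the eigen-equations, using the single structural fact that no edge of $H$ meets both $V(H_1)$ and $V(H_2)$. The extension-by-zeros step is sound (at a vertex of the other component every incident edge has all its vertices annihilated by $\mathbf{x}$, so the equation reads $0=\lambda\cdot 0^{k-1}$, valid for $k\geq 2$), and the restriction step correctly notes that at least one of $\mathbf{x}_1,\mathbf{x}_2$ is nonzero. Cooper and Dutle instead prove the stronger multiset identity $\phi_H=\phi_{H_1}^{(k-1)^{|V(H_2)|}}\,\phi_{H_2}^{(k-1)^{|V(H_1)|}}$ by manipulating the resultant for systems in disjoint variable sets; the set-level statement is a corollary. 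Your route buys simplicity and avoids all resultant machinery, at the cost of multiplicity information --- which this paper never uses, since the proof of Theorem \ref{t: main2} only needs that $\lambda\in\spec(H)$ iff $\lambda\in\spec(H_i)$ for some $i$. The one step you flag, the equivalence of ``root of the characteristic polynomial'' with ``existence of a nontrivial solution of the eigen-equations,'' is exactly the vanishing property of the resultant of $n$ homogeneous forms in $n$ variables; moreover, in the Qi--Lim framework adopted here an eigenvalue is \emph{defined} by the existence of a nonzero eigenvector, so your argument needs no detour through the characteristic polynomial at all.
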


The \emph{matching polynomial} $\varphi(H)$ of $H$ is defined by
\[
\varphi(H) = \sum_{i=0}^m (-1)^i |{\cal M}_i| x^{(m-i)r}
\]
where ${\cal M}_i$ is the collection of all $t$-matchings of $H$. We now show that $\varphi$ is multiplicative over connected components. 

\begin{claim}
\label{c: varphi}
If ${\cal H} = \bigsqcup_{i=1}^t H_i$ is a disjoint union of k-uniform hypertrees then $\varphi({\cal H}) = \prod_{i=1}^t \varphi(H_i)$.
\end{claim}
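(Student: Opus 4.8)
The plan is to prove the identity by a direct comparison of coefficients, after recording the one structural fact that makes everything work: in a disjoint union no edge meets two of the summands, so every edge of ${\cal H}$ lies entirely inside a unique $H_i$. Consequently any matching $M \subseteq E({\cal H})$ decomposes uniquely as $M = \bigsqcup_{j} \bigl(M \cap E(H_j)\bigr)$ with each $M \cap E(H_j)$ a matching of $H_j$, and conversely any choice of matchings of the $H_j$ assembles into a matching of ${\cal H}$ of the appropriate size. Thus for each $i$ there is a bijection
\[
{\cal M}_i({\cal H}) \;\cong\; \bigsqcup_{a_1+\cdots+a_t=i}\;\prod_{j=1}^{t}{\cal M}_{a_j}(H_j).
\]
I would state and use this first, since both of the numerical facts I need follow from it immediately.

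The two consequences are: (i) the matching number is additive, $m({\cal H}) = \sum_{j=1}^t m(H_j) =: m$, because $|M| = \sum_j |M\cap E(H_j)|$ and the summands may be maximized independently (here, as in the definition of $\varphi$, $m(H)$ is the largest $i$ with ${\cal M}_i(H)\neq\emptyset$ and ${\cal M}_0(H)=\{\emptyset\}$); and (ii) counting the bijection gives the convolution $|{\cal M}_i({\cal H})| = \sum_{a_1+\cdots+a_t=i}\prod_{j}|{\cal M}_{a_j}(H_j)|$. With these in hand the computation is mechanical: writing $m_j = m(H_j)$ and keeping $r$ the fixed exponent appearing in the definition of $\varphi$,
\begin{align*}
\prod_{j=1}^{t}\varphi(H_j)
&= \prod_{j=1}^{t}\;\sum_{a_j=0}^{m_j}(-1)^{a_j}\,|{\cal M}_{a_j}(H_j)|\,x^{(m_j-a_j)r}\\
&= \sum_{i=0}^{m}(-1)^{i}\Bigl(\sum_{a_1+\cdots+a_t=i}\;\prod_{j=1}^{t}|{\cal M}_{a_j}(H_j)|\Bigr)x^{(m-i)r}\\
&= \sum_{i=0}^{m}(-1)^{i}\,|{\cal M}_i({\cal H})|\,x^{(m-i)r}\;=\;\varphi({\cal H}),
\end{align*}
where the second line collects terms with $a_1+\cdots+a_t=i$ and uses $\sum_j m_j = m$, and the third uses (ii). (Equivalently, one can observe that $\varphi(H) = x^{m(H)r}M_H(-x^{-r})$, where $M_H(y)=\sum_i |{\cal M}_i(H)|\,y^i$ is the ordinary matching generating polynomial, which is classically multiplicative over disjoint unions; since the prefactor $x^{m(H)r}$ is multiplicative by (i), so is $\varphi$. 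This is essentially the same argument repackaged.)

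I do not expect a genuine obstacle. The only point that requires a moment's care is the bookkeeping of the exponent $(m-i)r$: the powers of $x$ line up correctly precisely because the matching numbers of the components add, which is why fact (i) is needed and not just (ii). It is worth remarking — and I would note this in the writeup — that the hypertree hypothesis is never used: the identity holds verbatim for a disjoint union of arbitrary $k$-graphs, and we invoke the connectedness of the $H_i$ only insofar as it is part of the hypotheses under which we will later apply the claim.
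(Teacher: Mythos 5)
Your proof is correct and follows essentially the same route as the paper's: both arguments rest on the fact that a matching of a disjoint union decomposes uniquely into matchings of the components, giving additivity of the matching number and a convolution identity for the coefficients $|{\cal M}_i|$. The only cosmetic difference is that you handle general $t$ in one step where the paper reduces to $t=2$ and inducts; your closing remarks (the reformulation via the ordinary matching generating polynomial, and the observation that the hypertree hypothesis is not needed) are accurate but not essential.
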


\begin{proof}  Clearly, the result follows inductively if it is true for $t = 2$. Denote matching numbers $m(H_1) = m_1$ and $m(H_2) = m_2$.  Indexing the sum of $H_1$ and $H_2$ by $i$ and $j$, respectively, we compute 
\begin{align*}
\varphi(H_1)\varphi(H_2) &= \left(\sum_{i = 0}^{m_1} (-1)^i |{\cal M}_i| x^{(m_1 - i)k}\right) \left(\sum_{j = 0}^{m_2} (-1)^i |{\cal M}_j| x^{(m_2 - i)k}\right) \\
&= \sum_{\substack{0 \leq i \leq m_1 \\ 0\leq j \leq m_2}}(-1)^{i+j} |{\cal M}_i||{\cal M}_j| x^{(m_1 + m_2 - (i+j))k}.
\end{align*}
Let \[\varphi(H_1 \sqcup H_2) := \sum_{\ell = 0}^m (-1)^\ell |{\cal M}_\ell| x^{(m - \ell)k}.\]  Since matchings of $H$ are unions of matchings of $H_1$ and $H_2$, $m = m_1 + m_2$ is the size of the the largest matching of $H$.  Furthermore, for any $0 \leq \ell \leq m$,
\[
|{\cal M}_{\ell}| = \sum_{\substack{0 \leq i \leq m_1 \\ 0 \leq j \leq m_2 \\ i + j = \ell}} |{\cal M}_i||{\cal M}_j|,
\]
since a matching of $\ell$ edges in $H$ consists of a matching of $i$ edges in $H_1$ and $j$ edges in $H_2$, where $i + j = \ell$.   Substituting yields  \[\varphi(H_1)\varphi(H_2)=\sum_{\ell = 0}^m (-1)^\ell |{\cal M}_\ell| x^{(m - \ell)k} = \varphi(H_1 \sqcup H_2)\] as desired.
\end{proof}

Recall from \cite{Zha} that a {\em pendant} edge is a $k$-uniform edge with exactly $k-1$ vertices of degree 1. 

\begin{claim}
\label{c: tree}
Let ${\cal H}$ be an $r$-uniform hypertree.  If $H \subseteq {\cal H}$ is a sub-hypertree then there exists a sequence of edges $(e_1, e_2, \dots, e_t)$ such that $e_i$ is a pendant edge of $H_i$ where $H_0 := {\cal H}$, $H_i := H_{i-1}-e_i$, and $H_t := H$.
\end{claim}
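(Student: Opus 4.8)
The plan is to induct on $t := |E(\mathcal{H})| - |E(H)|$. If $t = 0$ then $E(H) = E(\mathcal{H})$, and since $H$ and $\mathcal{H}$ are both connected with $V(H) \subseteq V(\mathcal{H})$ their vertex sets agree as well, so $H = \mathcal{H}$ and the empty sequence works ($H_0 = H_t$). For the inductive step it suffices to exhibit a single pendant edge $e_1$ of $\mathcal{H}$ with $e_1 \notin E(H)$ and $H \subseteq \mathcal{H} - e_1$: then $\mathcal{H} - e_1$ is again a $k$-uniform hypertree — deleting any edge keeps the hypergraph acyclic, and deleting a pendant edge together with its $k-1$ degree-$1$ vertices leaves the rest connected through the unique vertex of $e_1$ of degree $\geq 2$ — and it contains $H$ as a sub-hypertree with $|E(\mathcal{H} - e_1)| - |E(H)| = t - 1$, so applying the inductive hypothesis to the pair $(\mathcal{H} - e_1, H)$ and prepending $e_1$ to the resulting sequence finishes the step.

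Hence everything reduces to the following statement: if $E(H) \subsetneq E(\mathcal{H})$ (and $\mathcal{H}$ is not a single edge, a degenerate case discussed below), then $\mathcal{H}$ has a pendant edge $e \notin E(H)$ with $H \subseteq \mathcal{H} - e$. I would prove this using the bipartite incidence graph $B$ of $\mathcal{H}$, whose two sides are $V(\mathcal{H})$ and $E(\mathcal{H})$ with $v \sim e$ iff $v \in e$; because $\mathcal{H}$ is a hypertree, $B$ is a tree, and $H$ corresponds to a subtree $B_H \subsetneq B$. Root $B$ at a node of $B_H$ and let $f$ be an edge-node lying outside $B_H$ of maximum depth (such an $f$ exists since $E(H) \subsetneq E(\mathcal{H})$). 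Every child of $f$ in the rooted tree is then a leaf of $B$: otherwise $f$ would have an edge-node grandchild, strictly deeper than $f$ and — being a descendant of $f \notin B_H$ — also outside $B_H$, contradicting the choice of $f$. So the $k-1$ children of $f$ are vertices of degree $1$ in $\mathcal{H}$, while the parent of $f$ is a vertex of degree $\geq 2$ (here we use that $\mathcal{H}$ is not a single edge); hence $f$ is a pendant edge, $f \notin E(H)$ because $f \notin B_H$, and $H \subseteq \mathcal{H} - f$ because the $k-1$ degree-$1$ vertices removed with $f$ cannot belong to $V(H)$ — such a vertex would be isolated in the connected hypertree $H$.

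I expect the induction bookkeeping, the stability of ``hypertree'' under deletion of a pendant edge, and the elementary facts about connected hypergraphs to be routine. The real obstacle is producing the pendant edge $e \notin E(H)$ with $H \subseteq \mathcal{H} - e$: an arbitrary pendant edge of $\mathcal{H}$ need not work, since $H$ could contain all of them, so the choice must be steered by $H$ — the deepest edge-node outside $B_H$ above, equivalently an edge of $\mathcal{H}$ not in $H$ that is farthest from $V(H)$. Finally one should set aside the genuinely degenerate situation in which $\mathcal{H}$ is a single edge and $H$ a lone vertex, where $\mathcal{H}$ has no pendant edge at all; this does not arise in the intended application, since there $\varphi(H)$ must possess a nonzero root and hence $H$ carries at least one edge.
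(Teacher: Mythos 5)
Your proof is correct, and it follows the same basic plan as the paper --- induction on $t = |E(\mathcal{H})| - |E(H)|$ --- but it is substantially more complete than what the paper actually writes down. The paper's own proof disposes of $t=0$ and $t=1$ and stops; it never addresses the real content of the inductive step, namely that when $t \geq 2$ one must produce a pendant edge $e \notin E(H)$ whose removal (together with its $k-1$ degree-one vertices) leaves a hypertree still containing $H$. An arbitrary pendant edge of $\mathcal{H}$ does not suffice, as you correctly observe, and your device for steering the choice --- root the (bipartite, tree) incidence graph $B$ at a node of $B_H$ and take a deepest edge-node $f$ outside $B_H$, so that all children of $f$ are leaves and the parent of $f$ retains degree $\geq 2$ --- is exactly the missing argument; the uniqueness of paths in $B$ guarantees that descendants of $f$ lie outside $B_H$, so the deleted degree-one vertices avoid $V(H)$. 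Your flagged degenerate case ($\mathcal{H}$ a single edge, $H$ a lone vertex, so that $\mathcal{H}$ has no pendant edge under the paper's definition) is a genuine boundary failure of the claim as literally stated, but it is harmless for the intended application, where $H$ always carries an edge. In short: no gaps; your write-up proves more than the paper's does.
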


\begin{proof}
We prove our claim by induction on $| |E({\cal H})| - |E(H)||$.  The base case of $| |E({\cal H})| - |E(H)|| = 0$ is immediate.  Suppose that $||E({\cal H})| - |E(H)|| = 1$.  Since $H$ is a tree, $H$ is necessarily connected.  In particular, $H$ is formed by removing a pendant edge of ${\cal H}$, and the claim follows.
\end{proof}

In keeping with the notation of $\cite{Zha}$, let $s({\cal H})$ be the set of all sub-hypertrees of $\cal{H}$. 

\begin{theorem} (\cite{Zha})
\label{t: tree}
For any $k$-uniform hypertree ${\cal H}$ where $k \geq 3$, the roots of \[\prod_{H_i \in s(H)} \varphi(H_i)\] are eigenvalues of ${\cal H}$.  Moreover, the largest root is the spectral radius of $H$.
\end{theorem}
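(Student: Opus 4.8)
The plan is to combine three ingredients: Theorem~\ref{t:main}, which identifies the roots of $\varphi(H)$ with certain eigenvalues of a hypertree $H$; an ``eigenvalue inheritance'' principle carrying eigenvalues of an induced sub-hypertree up to $\mathcal H$; and the Perron--Frobenius theory for connected hypergraphs, to locate the spectral radius among the roots. I would first note that, since a product of polynomials over $\mathbb{C}$ vanishes exactly when one of its factors does, a root of $\prod_{H_i\in s(\mathcal H)}\varphi(H_i)$ is precisely a root $\lambda$ of a single $\varphi(H_i)$ for some sub-hypertree $H_i\in s(\mathcal H)$, and by Theorem~\ref{t:main} such a $\lambda$ is automatically a nonzero eigenvalue of $H_i$ carrying a totally nonzero eigenvector on $H_i$. (That $\lambda\neq0$ is also visible directly: the lowest-degree term of $\varphi(H_i)$ is the nonzero constant $(-1)^{m}|{\cal M}_{m}|$ with $m=m(H_i)$, so $0$ is never a root.) The entire problem thus reduces to promoting eigenvalues of a sub-hypertree to eigenvalues of $\mathcal H$.

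The key step I would isolate is an inheritance lemma: if $G'$ is obtained from a $k$-graph $G$ by adjoining a single pendant edge $e=\{v,u_1,\dots,u_{k-1}\}$, where $u_1,\dots,u_{k-1}$ are the new degree-one vertices, then every eigenvalue $\lambda\neq0$ of $G$ is an eigenvalue of $G'$. To prove it I would take an eigenpair $(\lambda,{\bf x})$ of $G$ with $\lambda\neq0$, define ${\bf y}$ on $V(G')$ by ${\bf y}|_{V(G)}={\bf x}$ and $y_{u_i}=0$ for all $i$, and verify the eigen-equations of $G'$ coordinate by coordinate: at a vertex of $G$ other than $v$ nothing changes; at $v$ the only new term is the contribution of $e$, which is $\prod_{i=1}^{k-1}y_{u_i}=0$, so the equation still reads $\lambda x_v^{k-1}$; and at each $u_i$ the equation becomes $y_v\prod_{j\neq i}y_{u_j}=\lambda y_{u_i}^{k-1}$, whose left side vanishes because the product runs over the $k-2\geq1$ other new vertices (this is exactly where $k\geq3$ is used), and whose right side vanishes because $y_{u_i}=0$. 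Hence $(\lambda,{\bf y})$ is an eigenpair of $G'$. I would then invoke Claim~\ref{c: tree}: every sub-hypertree $H\subseteq\mathcal H$ is reached from $\mathcal H$ by a sequence of pendant-edge deletions, so $\mathcal H$ is reached from $H$ by the reverse sequence of pendant-edge additions, and iterating the inheritance lemma along this sequence (note that the intervening eigenvectors need not be totally nonzero, which is fine, since the lemma only asks $\lambda\neq0$) gives $\lambda\in\spec(\mathcal H)$. This settles the first assertion.

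For the ``moreover'' statement I would use that $\mathcal H\in s(\mathcal H)$, so $\varphi(\mathcal H)$ is itself one of the factors. Since $\mathcal H$ is connected, the Perron--Frobenius theorem for connected hypergraphs (\cite{Coo}) furnishes a strictly positive --- hence totally nonzero --- eigenvector for the spectral radius $\rho(\mathcal H)$, so $\rho(\mathcal H)$ is a nonzero eigenvalue of $\mathcal H$ with a totally nonzero eigenvector and is therefore, by Theorem~\ref{t:main}, a root of $\varphi(\mathcal H)$ and a fortiori of $\prod_{H_i\in s(\mathcal H)}\varphi(H_i)$. On the other hand, every root of this product is an eigenvalue of $\mathcal H$ by the first part, hence has modulus at most $\rho(\mathcal H)$. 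Consequently $\rho(\mathcal H)$ occurs among the roots and dominates all of them in modulus, so it is the largest root.

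I expect the inheritance step to be the main obstacle, and specifically making the hypothesis $k\geq3$ do its work: the extension-by-zero trick succeeds precisely because a pendant edge then has at least two degree-one vertices, which forces every monomial appearing in an eigen-equation at a new vertex (or at the attachment vertex) to contain a zero factor. For $k=2$ this collapses --- the eigen-equation at the lone new vertex $u_1$ forces $x_{u_1}=x_v/\lambda\neq0$, so zero-extension fails --- which is exactly the obstruction reflected by Cauchy interlacing and the reason the theorem, like Theorem~\ref{t: main2}, genuinely needs $k\geq3$. A secondary, bookkeeping-level point is to read the edge deletions in Claim~\ref{c: tree} as also deleting the resulting isolated vertices, so that each intermediate $H_i$ is an honest hypertree and each reverse operation is literally ``adjoin $k-1$ new vertices and a pendant edge''; if one instead kept the isolated vertices around, Theorem~\ref{t: subgraph} together with the fact that isolated vertices carry coordinate $0$ in any eigenpair with $\lambda\neq0$ would patch the argument.
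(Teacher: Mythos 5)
This theorem is imported from \cite{Zha} and the paper gives no proof of its own, so there is no internal argument to compare against; your proposal is correct and is essentially the standard argument from the cited source. The two ingredients check out: the zero-extension inheritance lemma is sound (at each new vertex $u_i$ the eigen-equation's left side contains at least one factor $y_{u_j}=0$ with $j\neq i$ precisely because $k-1\geq 2$, and at $v$ the new monomial vanishes for the same reason), and iterating it along the reversed pendant-edge sequence of Claim \ref{c: tree} lifts each nonzero root of $\varphi(H_i)$, guaranteed to be a nonzero eigenvalue of $H_i$ by Theorem \ref{t:main}, up to ${\cal H}$; the ``moreover'' clause then follows as you say from $\mathcal H\in s(\mathcal H)$, Perron--Frobenius for connected hypergraphs, and the fact that every root of the product is an eigenvalue of $\mathcal H$ and hence bounded in modulus by $\rho(\mathcal H)$. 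Your closing bookkeeping remark is also the right one to make: the pendant-edge deletions must be read as removing the $k-1$ degree-one vertices as well, so that each reverse step is literally the operation your lemma handles.
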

 
We now present a proof of Theorem \ref{t: main2}.  

\begin{proof}
Theorem \ref{t:main} establishes the case when $\lambda$ is nonzero and has an eigenvector which is totally nonzero.

Let $(\lambda, {\bf x})$ be a nonzero eigenpair of ${\cal H}$.  From Claim \ref{l: supp}, it follows that $(\lambda, {\bf x}^\circ)$ is a totally nonzero eigenpair of ${\cal H}[\supp({\bf x})]$.  For the moment, let \[H = {\cal H}[\supp({\bf x})] \subseteq {\cal H}.\]  If $H$ is connected we apply Theorem \ref{t:main} to conclude that $\lambda$ is a root of $\varphi(H)$ as desired.  If instead $H$ is disconnected, then we may write $H = \bigsqcup_{i=1}^t H_i$, where each $H_i$ is connected.  Appealing to Theorem \ref{t: subgraph}, $\lambda$ is a root of $\varphi(H)$, whence $(x-\lambda) \mid \varphi(H_i)$ for some $i$.  Indeed, $(\lambda, ({\bf x^\circ})^\circ)$ is a totally nonzero eigenpair of $H_i$.  Therefore, $H_i$ is a connected subgraph of $\cal H$ we apply Theorem \ref{t:main} to conclude that $\lambda$ is a root of $\varphi(H_i)$, as desired.

Now suppose that $\lambda$ is a root of $\varphi(H)$ for $H \subseteq {\cal H}$.   The case of $H = {\cal H}$ was established by Theorem \ref{t:main}: suppose further that $H \subsetneq {\cal H}$.  If $H$ is connected then by Theorem \ref{t:main} $(\lambda, {\bf x^\circ})$ is a totally nonzero eigenpair of $H$.  By Claim \ref{c: tree} and Theorem \ref{t: tree} we conclude that $\lambda$ is an eigenvalue of ${\cal H}$.   Suppose even further that $H = \bigsqcup_{i=1}^t H_i$ where $H_i$ is a connected component.  We have shown in Claim \ref{c: varphi}, \[\varphi(H) = \prod_{i=1}^t \varphi(H_i).\]  As $\lambda$ is a root of $\varphi(H)$, it is true that $\lambda$ is a root of $\varphi(H_i)$ for some $i$.  Appealing to Theorem \ref{t:main}, once more we have that $(\lambda, {\bf x}^\circ)$ is a totally nonzero eigenpair of $H_i$; therefore, $(\lambda, {\bf x})$ is an eigenpair of ${\cal H}$ by Claim \ref{c: tree} and Theorem \ref{t: tree} as desired. 
\end{proof}

\section{The spectra of power trees} 

The following generalizes the definition of powers of a hypergraph from \cite{Hu}.

\begin{definition}
Let $H$ be an $r$-graph for $r \geq 2$.  For any $k \geq r$, the $k^\textrm{th}$ power of $G$, denoted $H^k$, is a $k$-uniform hypergraph with edge set \[E(H^k) = \{e \cup \{v_{e,1}, \dots, v_{e, k-r}\} : e \in E(G)\},\] and vertex set \[V(H^k) = V(G) = V(G) \cup \{i_{e,j} : e \in E(G), j \in [k-r]\}.\]
\end{definition}

In other words, one adds exactly enough new vertices (each of degree $1$) to each edge of $H$ so that $H^k$ is $k$-uniform.  Note that, if $k=r$, then $H^k = H$.  Adhering to this nomenclature we refer to a power of a $2$-tree simply as a {\em power tree}.  In this section we prove the following characterization of power trees.

\begin{theorem}
\label{t: main3}
Let ${\cal H}$ be a $k$-tree.  Then $\sigma({\cal H}) \subseteq \mathbb{R}[\zeta_k]$ if and only if ${\cal H}$ is a power tree, where $\zeta_k$ is a principal $k^\textrm{th}$ root of unity.
\end{theorem}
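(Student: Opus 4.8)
The plan is to leverage Theorem~\ref{t: main2} to reduce the problem to a statement about the matching polynomials $\varphi(H)$ of connected induced sub-hypertrees, and then to analyze when all such polynomials have roots in $\mathbb{R}[\zeta_k]$. First I would record the explicit shape of $\varphi(H)$: since $\varphi(H) = \sum_i (-1)^i |\mathcal{M}_i| x^{(m-i)k}$, it is a polynomial in $x^k$, so writing $y = x^k$ we obtain a polynomial $p_H(y) = \sum_i (-1)^i |\mathcal{M}_i| y^{m-i}$ whose roots are the squares-of-$k$-th-powers of eigenvalues. The roots of $p_H$ are real (indeed, I would argue nonnegative): one way to see this is that $p_H(y)$, up to a known substitution, is the matching polynomial of the associated $2$-uniform "skeleton" tree whose matching polynomial has only real roots by the classical Heilmann--Lieb theorem. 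Consequently every nonzero eigenvalue $\lambda$ of $\mathcal{H}$ satisfies $\lambda^k = \mu$ for some nonnegative real $\mu$, i.e. $\lambda = \mu^{1/k}\zeta_k^{j}$ for some $j$; so $\lambda \in \mathbb{R}[\zeta_k]$ \emph{iff} $\mu^{1/k} \in \mathbb{R}[\zeta_k]$, which for a real algebraic number forces $\mu^{1/k}$ to actually be (a real multiple of a power of $\zeta_k$, hence) controlled by whether $\mu$ is a perfect $k$-th power of a real eigenvalue of a graph.

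For the "if" direction, suppose $\mathcal{H} = G^k$ is a power tree. The key observation is that $k$-matchings of $G^k$ biject with matchings of the base graph $G$ (the added degree-$1$ vertices force each hyperedge to be "used" exactly as the corresponding edge of $G$), and the same holds for every connected induced sub-hypertree $H \subseteq G^k$, which is itself a power tree $G'^k$ for an induced subtree $G' \subseteq G$ together with possibly some truncated pendant hyperedges. Tracking the matching counts, I expect $p_H(y)$ to equal (up to a power of $y$ accounting for isolated added vertices) the ordinary matching polynomial $\mu(G', \sqrt{y})$ of a $2$-uniform forest $G'$, whose roots in $\sqrt{y}$ are real algebraic integers. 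Then the eigenvalues of $\mathcal{H}$ are exactly $\{ \theta \zeta_k^j : \theta \text{ a root of } \mu(G'), \ G' \text{ an induced subforest}, \ 0 \le j < k\} \cup \{0\}$, and each such number lies in $\mathbb{R}[\zeta_k]$ since $\theta \in \mathbb{R}$. This gives $\sigma(\mathcal{H}) \subseteq \mathbb{R}[\zeta_k]$.

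For the "only if" direction I would argue contrapositively: if $\mathcal{H}$ is not a power tree, then $\mathcal{H}$ contains some connected induced sub-hypertree $H$ that is "genuinely hyper" in the sense that it has an edge $e$ meeting at least two other edges at two distinct vertices of $e$ (equivalently, the base structure is not obtained by padding a graph). For such $H$ one computes $\varphi(H)$ and exhibits a root $\lambda$ with $\lambda^k = \mu$ where $\mu$ is a real algebraic number that is \emph{not} a $k$-th power within $\mathbb{R}$ of any real algebraic number — for instance because $\mu$ is not a perfect $k$-th power and its minimal polynomial has a root that is negative or whose $k$-th roots are all non-real-except-one in a way incompatible with lying in $\mathbb{R}[\zeta_k]$. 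Concretely I would look for the smallest such $H$ (this is where the $11$-vertex $3$-uniform example in the paper presumably comes in, or a cleaner family like a "spider" with three hyperedges through a common vertex) and show that $\varphi(H)$ has a root generating a field not contained in $\mathbb{R}[\zeta_k]$, typically by a degree or ramification argument on the number field $\mathbb{Q}(\lambda)$.

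The main obstacle I anticipate is the "only if" direction: showing that non-power trees must produce an eigenvalue outside $\mathbb{R}[\zeta_k]$ requires understanding the arithmetic of roots of $\varphi(H)$ for a sufficiently rich family of "essentially hyper" sub-hypertrees $H$, and ruling out the possibility that all their roots happen to land in $\mathbb{R}[\zeta_k]$ by coincidence. I would handle this by isolating a minimal "forbidden configuration" — the smallest connected induced sub-hypertree that is not a power tree — proving a structural lemma that every non-power tree contains it as an induced subtree, and then doing a single explicit computation of $\varphi$ for that configuration to exhibit a root $\mu^{1/k}\zeta_k^j$ with $\mu$ a non-$k$-th-power real algebraic number (so that $[\mathbb{Q}(\mu^{1/k}):\mathbb{Q}]$ or the real-subfield condition is violated). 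The bookkeeping in the "if" direction (matching $k$-matchings of a power tree to matchings of its base and handling the isolated padding vertices and truncated pendant edges in induced sub-hypertrees) is routine but needs care, since an induced sub-hypertree of $G^k$ need not itself be a power tree on the nose — it can have "partial" hyperedges — so I would first prove the lemma that such truncations do not introduce new nonzero eigenvalues beyond those coming from honest power subtrees.
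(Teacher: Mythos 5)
Your overall architecture matches the paper's --- reduce to matching polynomials of connected induced sub-hypertrees via Theorem \ref{t: main2}, handle the power-tree direction by identifying matchings of $G^k$ with matchings of the base forest (this is essentially Corollary \ref{c: Zho}), and handle the converse by locating a forbidden induced configuration --- but there are two genuine problems. First, your opening claim that $p_H(y)$ has only real roots for \emph{every} hypertree $H$ (via a ``$2$-uniform skeleton'' and Heilmann--Lieb) is false and would contradict the very theorem you are proving: a general $k$-tree has no $2$-uniform skeleton whose matchings biject with its own, and for $H=\Comb_3$ one gets $p_H(y)=y^3-4y^2+3y-1$, whose discriminant is $-31<0$, so it has a pair of non-real roots. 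Real-rootedness of $p_H$ for all induced subtrees is precisely the property that \emph{characterizes} powers; it cannot be asserted at the outset. It is fine when restricted to the ``if'' direction, where $H$ really is a power of a forest, and there your argument is sound.

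Second, in the ``only if'' direction your forbidden configuration is wrong and the decisive computation is only promised, not performed. An edge meeting two other edges at two distinct vertices already occurs in power trees --- the middle edge of the $k$-th power of a three-edge path does exactly this, since a base edge has two core vertices --- so your structural lemma would flag power trees as non-powers. The correct obstruction is an edge with at least \emph{three} vertices of degree $\ge 2$, equivalently an edge incident to $t\ge 3$ pairwise disjoint edges; these form an induced copy of $\Comb_t^k$. The arithmetic you defer to ``a degree or ramification argument'' is where the content lies, and the paper's version is short and explicit: $\varphi(\Comb_t)$ in the variable $\alpha=x^t$ is $(\alpha-1)^t-\alpha^{t-1}$, and the substitution $\beta=\alpha^{-1}$ turns the root condition into $(\beta-1)^t=\beta$, which has at most two real solutions; since $t\ge 3$, some root has $\alpha=\lambda^t$ (hence $\lambda^k$ for the padded comb) non-real, so $\lambda$ is not a real multiple of any power of $\zeta_k$, and Theorem \ref{t: main2} lifts $\lambda$ to an eigenvalue of $\mathcal{H}$. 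Without the correct configuration and this (or an equivalent) explicit exhibition of a non-real $\lambda^k$, the converse direction is not established.
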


We recall the following Theorem from Cooper-Dutle.

\begin{theorem} \cite{Coo} The (multiset) spectrum of a $k$-cylinder is invariant under multiplication by any $k^\textrm{th}$ root of unity.
\end{theorem}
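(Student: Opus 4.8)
The plan is to realize each $k$-th root of unity as an invertible diagonal change of variables on eigenvectors that exactly rescales the eigenvalue. The structural feature of a $k$-cylinder that I would exploit is that it admits a vertex coloring $c : V(\mathcal{H}) \to \mathbb{Z}/k\mathbb{Z}$ under which the colors along every edge sum to a fixed value $s$ that is a unit modulo $k$; after replacing $c$ by $s^{-1}c$ we may assume $s \equiv 1$. Fix a principal $k$-th root of unity $\zeta$ and set $D = \operatorname{diag}(\zeta^{c_v})_{v \in V}$. Given a nonzero eigenpair $(\lambda, {\bf x})$, i.e.\ a solution of $\sum_{i_2,\dots,i_k} a_{j i_2 \cdots i_k} x_{i_2}\cdots x_{i_k} = \lambda x_j^{k-1}$ for all $j$, I would substitute ${\bf x} = D^{-1}{\bf y}$ and track the root-of-unity factors. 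Because every nonzero entry $a_{j i_2 \cdots i_k}$ corresponds to an edge $e = \{j,i_2,\dots,i_k\}$, the exponent $c_{i_2}+\cdots+c_{i_k}$ equals $\big(\sum_{w \in e} c_w\big) - c_j \equiv 1 - c_j \pmod{k}$, a quantity depending only on $j$. Pulling this common factor out of the $j$-th equation and simplifying $\zeta^{-(k-1)c_j} = \zeta^{c_j}$ shows that $(\zeta\lambda, D{\bf x})$ is again an eigenpair. Since $D$ is invertible, ${\bf x} \mapsto D{\bf x}$ is a bijection carrying the eigenvectors for $\lambda$ onto those for $\zeta\lambda$; this already gives invariance of the spectrum as a set, and iterating through powers of $\zeta$ sweeps out all of $\mu_k$.

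The content of the theorem, however, is the multiset statement, and for this I would pass to the characteristic polynomial $\varphi_{\mathcal{A}}(\lambda) = \operatorname{Res}\!\big(\mathcal{A}{\bf x}^{k-1} - \lambda\,{\bf x}^{[k-1]}\big)$, whose roots are the eigenvalues counted with their algebraic multiplicities. The computation above says precisely that the invertible diagonal substitution ${\bf x} \mapsto D^{-1}{\bf x}$ carries the polynomial system cutting out the $\zeta\lambda$-eigenpairs to the one cutting out the $\lambda$-eigenpairs. Invariance of the resultant under an invertible linear change of variables (up to a nonzero scalar, a power of $\det D$) then yields $\varphi_{\mathcal{A}}(\zeta\lambda) = c\,\varphi_{\mathcal{A}}(\lambda)$ for a nonzero constant $c$. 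Writing $\varphi_{\mathcal{A}}(\lambda) = \prod_i (\lambda - \mu_i)$ and comparing the two factorizations of $\varphi_{\mathcal{A}}(\zeta\lambda)$ forces the multiset $\{\mu_i\}$ to be invariant under $\mu \mapsto \zeta^{-1}\mu$; since $\zeta$ is primitive, the cyclic group it generates is all of $\mu_k$, so the multiset spectrum is invariant under multiplication by every $k$-th root of unity.

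I expect the main obstacle to be the multiplicity bookkeeping of the second paragraph rather than the eigenpair manipulation of the first: exhibiting the bijection on eigenvectors is routine, but certifying that it preserves algebraic multiplicities requires the precise transformation law of the tensor resultant under diagonal similarity. I would either cite the invariance of the characteristic polynomial under diagonal similarity from \cite{Coo}, or, for a self-contained argument, replace the resultant step with the trace identities $\Tr_d(\mathcal{A}) = \sum_i \mu_i^d$: the same color-sum condition weights each closed structure of length $d$ by a fixed power of $\zeta$, forcing $\Tr_d(\mathcal{A}) = 0$ whenever $k \nmid d$, and the vanishing of exactly these power sums is equivalent (via Newton's identities) to $\mu_k$-invariance of $\{\mu_i\}$. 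A secondary point to handle carefully is confirming that the cylinder structure genuinely supplies a coloring whose edge-sum $s$ is a unit modulo $k$, since only then is $\zeta$ obtained as a primitive root and one recovers invariance under all of $\mu_k$ rather than a proper subgroup.
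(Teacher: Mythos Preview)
The paper does not prove this theorem; it simply quotes it from \cite{Coo} (Cooper--Dutle) and uses it as a black box, so there is no in-paper argument to compare against. Your sketch is in fact the standard proof from \cite{Coo}: exploit the $k$-partite structure to define a diagonal matrix $D$ that conjugates the eigenvalue equations into those for $\zeta\lambda$, and then pass to the resultant definition of the characteristic polynomial to upgrade set invariance to multiset invariance.

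Two small remarks. First, your flagged ``secondary point'' about whether the edge-sum $s$ can be taken to be a unit modulo $k$ is easily dispatched: for a $k$-cylinder with vertex classes $V_1,\dots,V_k$, take $c$ to be the indicator of a single class $V_1$, so every edge has color-sum exactly $1$. (The naive coloring by class index gives $s=\binom{k}{2}$, which is \emph{not} a unit for $k\geq 3$, so this choice matters.) Second, in the resultant step you should be slightly careful: the invariance you need is not under an arbitrary linear change of variables but under the specific diagonal similarity $\mathcal{A}\mapsto D^{-(k-1)}\mathcal{A}D^{\otimes(k-1)}$, under which the characteristic polynomial is genuinely unchanged (the scalar you call $c$ equals $1$), because the determinant factor from each coordinate cancels between the two sides of the eigenvalue system. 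With those points tightened, your argument is complete and matches the source \cite{Coo} that the paper cites.
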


One can show by straightforward induction that a $k$-tree is a $k$-cylinder, so its spectrum is symmetric in the above sense.  The following result, from \cite{Zho}, shows that power trees have spectra which satisfy a much more stringent condition: they are cyclotomic, in the sense that they belong to $\mathbb{R}[\zeta_k]$.

\begin{theorem}
\label{t: Zho}
\cite{Zho} If $\lambda \neq 0$ is an eigenvalue of any subgraph of $G$, then $\lambda^{2/k}$ is an eigenvalue of $G^k$ for $k \geq 4$.
\end{theorem}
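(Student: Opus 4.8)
The plan is to prove Theorem~\ref{t: Zho} by an explicit eigenvector-lifting construction: given a nonzero eigenvalue $\lambda$ of a subgraph $G' \subseteq G$ with eigenvector $\mathbf{y}$, I will build an eigenvector $\mathbf{x}$ for $G^k$ whose eigenvalue is $\lambda^{2/k}$. The key structural observation is that $G^k$ is obtained from $G$ by subdividing each edge $e$ with $k-2$ new degree-one vertices $\{i_{e,j}\}$. So the eigenvalue equations of $G^k$ split into two types: the equations at original vertices $v \in V(G)$, and the equations at the new pendant vertices $i_{e,j}$. I would first write out both families explicitly using the normalized adjacency tensor, and treat the pendant-vertex equations as constraints that let me solve for the values $x_{i_{e,j}}$ in terms of the values of $\mathbf{x}$ on the original vertices of each edge.

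First I would set up notation. For an edge $e = \{u,w\} \in E(G)$ the corresponding $k$-edge in $G^k$ is $e \cup \{i_{e,1},\dots,i_{e,k-2}\}$. The eigenvalue equation at a pendant vertex $i_{e,j}$ (which lies in exactly one hyperedge) reads, after clearing the normalizing constant, $x_u x_w \prod_{\ell \neq j} x_{i_{e,\ell}} = \mu\, x_{i_{e,j}}^{k-1}$ where $\mu = \lambda^{2/k}$ is the target eigenvalue. By the symmetry of this system across $j$, I expect the natural ansatz $x_{i_{e,1}} = \cdots = x_{i_{e,k-2}} =: z_e$, which reduces the pendant equations to a single relation $x_u x_w z_e^{k-3} = \mu\, z_e^{k-2}$, i.e. $z_e^{2} = x_u x_w / \mu$ (taking the common factor apart carefully). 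Solving gives $z_e$ as a square root of $x_u x_w/\mu$; the role of the hypothesis $k \geq 4$ is to guarantee there are genuinely new vertices to subdivide (so $k - 2 \geq 2$) and that the exponent arithmetic producing $\lambda^{2/k}$ works out, and I would check the boundary behavior of the exponents at $k=3$ to confirm why the statement is asserted only for $k \geq 4$.

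Next I would impose the equation at an original vertex $v \in V(G)$. Summing over the hyperedges of $G^k$ containing $v$ — which correspond exactly to the edges of $G$ incident to $v$ — each contributes a term $x_w z_e^{k-2}$ for the $G$-neighbor $w$ across edge $e$. Substituting $z_e^{k-2} = (x_u x_w/\mu)^{(k-2)/2}$ from the previous step, I would choose the free scaling of $\mathbf{x}$ on $V(G)$ to be $x_v = y_v^{\alpha}$ for an appropriate exponent $\alpha$ chosen so that the aggregated equation collapses to $\lambda$ times the $G$-eigenvector relation $\sum_{w \sim v} y_w = \lambda\, y_v$. Matching the powers of $\mu$ and of the $y$-variables on both sides is what pins down both $\alpha$ and the identity $\mu^{k/2} = \lambda$, i.e. $\mu = \lambda^{2/k}$. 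Finally, for the vertices of $G$ outside the subgraph $G'$, I would set $\mathbf{x}$ (and the associated $z_e$) to zero, so their eigenvalue equations are satisfied trivially, and I would verify that this does not disturb the equations at vertices of $G'$ since $G'$ is an induced (or at least edge-closed) subgraph.

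The main obstacle will be the exponent bookkeeping: the homogeneous adjacency eigenvalue equations are degree-$(k-1)$, so every substitution introduces fractional powers and one must fix consistent branch choices for the square roots $z_e$ and the powers $y_v^\alpha$ simultaneously across all edges, confirming that a single global choice makes every equation hold rather than only holding edge-by-edge. I expect to resolve this by checking that the eigenvector equations are invariant under the diagonal scaling symmetries of the adjacency tensor (the $k$-cylinder symmetry already invoked in the paper), so that the branch ambiguities are exactly the freedom of multiplying coordinates by roots of unity and can be absorbed. A secondary subtlety is handling coordinates $y_v = 0$ when $G'$ is a proper subgraph, ensuring the lifted vector is a legitimate (nonzero) eigenvector and that the claimed eigenvalue is genuinely $\lambda^{2/k}$ and not a spurious root introduced by clearing denominators.
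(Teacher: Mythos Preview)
The paper does not supply its own proof of Theorem~\ref{t: Zho}; it is quoted verbatim from \cite{Zho}, and the surrounding text explicitly defers to the argument ``appearing in \cite{Zho}.'' So there is no in-paper proof to compare against.

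That said, your eigenvector-lifting construction is exactly the standard argument (and is how \cite{Zho} proceeds), and it is correct in outline. One slip to fix: the pendant-vertex relation should read $x_u x_w\, z_e^{\,k-3} = \mu\, z_e^{\,k-1}$, not $\mu\, z_e^{\,k-2}$, since the eigenvalue equation places the exponent $k-1$ on the right; you nonetheless land on the right consequence $z_e^{2} = x_u x_w/\mu$. Substituting this into the equation at an original vertex $v$ yields $\mu^{k/2} x_v^{k/2} = \sum_{w \sim v} x_w^{k/2}$, so the exponent you are seeking is $\alpha = 2/k$, i.e.\ $x_v = y_v^{2/k}$, and matching forces $\mu^{k/2} = \lambda$. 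Your diagnosis of the restriction $k \geq 4$ is also the right one: for an edge $e = \{u,w\} \in E(G) \setminus E(G')$ you set $z_e = 0$, and the pendant equation there reads $0 = x_u x_w\, z_e^{\,k-3}$; this is automatic when $k - 3 \geq 1$, but when $k = 3$ and $G'$ is a non-induced subgraph with $y_u,y_w \neq 0$, it fails --- which is precisely why the corollary for $k = 3$ in the paper requires the tree hypothesis and Claim~\ref{c: tree}.
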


We restate Theorem \ref{t: Zho} with the additional  assumption that the underlying graph is a tree; the proof is easily obtained by applying Claim \ref{c: tree} to the proof of Theorem \ref{t: Zho} appearing in \cite{Zho}.

\begin{corollary}
\label{c: Zho}
If $\lambda \neq 0$ is an eigenvalue of any subgraph of a tree $T$, then $\lambda^{2/k}$ is an eigenvalue of $T^k$ for $k \geq 3$.
\end{corollary}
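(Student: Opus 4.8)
The plan is to follow the proof of Theorem~\ref{t: Zho} in \cite{Zho} essentially verbatim and to insert Claim~\ref{c: tree} at the single point that genuinely requires $k \geq 4$. I would begin with the standard reductions. A subgraph of a tree is a forest, and the spectrum of a disjoint union is the union of the spectra (cf.\ Theorem~\ref{t: subgraph}), so $\lambda$ is an eigenvalue of one of the components, hence of a subtree of $T$; restricting a witnessing eigenvector to its support (the evident $k = 2$ analogue of Lemma~\ref{l: supp}) and passing to a component once more, we may assume that $\lambda \neq 0$ is an eigenvalue of a \emph{connected} subgraph $H \subseteq T$ — that is, a subtree of $T$ — carrying a totally nonzero eigenvector $y$.

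The next step is to exhibit $\lambda^{2/k}$ as a root of $\varphi(H^k)$. Following \cite{Zho}, I would set $x_v = y_v^{2/k}$ on the vertices of $H$ and assign a common value, proportional to $(y_a y_b)^{1/k}$ times the appropriate power of $\lambda$, to the $k-2$ new degree-one vertices of each blown-up edge $\{a,b\}$; expanding the eigen-equation of $H^k$ at the new and at the original vertices shows that $(\lambda^{2/k}, x)$ is a totally nonzero eigenpair of $H^k$, and the only bound used is $k \geq 3$, since the exponent $k-3$ that appears on the auxiliary coordinates is already nonnegative at $k = 3$. Theorem~\ref{t:main}, applied to the hypertree $H^k$, then gives that $\lambda^{2/k}$ is a root of $\varphi(H^k)$. (Alternatively, avoiding \cite{Zho} altogether: the matchings of $H^k$ biject with those of $H$, and $H$, being a forest, has matching polynomial equal to its characteristic polynomial; tracking exponents through $\varphi(H^k)$ shows that its nonzero roots are exactly the $k$-th roots of the squares of the nonzero eigenvalues of $H$, so in particular $\lambda^{2/k}$ is one of them.)

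Finally I would apply Claim~\ref{c: tree}. Since $H$ is a subtree of $T$, the power $H^k$ is a connected sub-hypertree of the hypertree $T^k$; indeed $H^k = T^k[V(H^k)]$, because the auxiliary vertices of an edge $e$ of $T^k$ lie in $V(H^k)$ only when $e \in E(H^k)$, so no edge of $T^k$ falls inside $V(H^k)$ without already being an edge of $H^k$. Hence $H^k \in s(T^k)$, so $\varphi(H^k)$ divides $\prod_{H_i \in s(T^k)} \varphi(H_i)$, and Theorem~\ref{t: tree} — valid for all $k \geq 3$ — says that every root of this product, in particular the root $\lambda^{2/k}$ of $\varphi(H^k)$, is an eigenvalue of $T^k$, which is the claim. (Equivalently, Claim~\ref{c: tree} presents $T^k$ as $H^k$ with a sequence of pendant edges reattached one at a time — the $k$-th powers of the pendant edges peeling $T$ down to $H$ — and one checks directly that $\lambda^{2/k}$ survives each reattachment upon extending the eigenvector by $0$ on the new vertices.)

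I expect the only real obstacle to be isolating from \cite{Zho} the precise place where the hypothesis $k \geq 4$ is spent and confirming that Claim~\ref{c: tree} recovers it. The point should be that over a general host graph one is forced to reattach edges that are \emph{chords} — joining two vertices already present, whose coordinates may be nonzero — and then the monomial at an auxiliary vertex of such a chord vanishes only when a \emph{second} auxiliary coordinate is zero, i.e.\ only when $k - 2 \geq 2$; over a tree, Claim~\ref{c: tree} forces every reattached edge to be \emph{pendant}, one of whose endpoints is new and hence carries coordinate $0$, so $k = 3$ already suffices. The residual checks — that the construction of \cite{Zho} is valid at $k = 3$, and that a branch of $\lambda^{2/k}$ is fixed consistently throughout — are routine.
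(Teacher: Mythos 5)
Your proposal is correct and follows essentially the same route the paper intends: its ``proof'' is the one-line remark that one applies Claim~\ref{c: tree} to the argument of Theorem~\ref{t: Zho} from \cite{Zho}, and you carry out exactly that plan, correctly isolating the auxiliary-vertex equation for chord edges as the sole place where $k\geq 4$ is spent and observing that inducedness of a subtree of a tree (equivalently, the pendant-edge peeling of Claim~\ref{c: tree}) removes the obstruction at $k=3$. Your reduction to a connected induced subtree carrying a totally nonzero eigenvector, the verification that the construction of \cite{Zho} closes at $k=3$ on $H^k$ itself, and the lift to $T^k$ via Theorem~\ref{t: tree} are all sound, and they supply the detail the paper omits.
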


Note that Theorem \ref{t: main3} provides a converse to Corollary \ref{c: Zho} in the case of power trees.  In particular, appealing to Theorem \ref{t: main2}, Corollary \ref{c: Zho}, and the fact that the spectrum of a graph is real-valued, we have that the spectrum of a power tree is a subset of $\mathbb{R}[\zeta_k]$.  All that remains to be shown is that if a $k$-tree is not a power tree then it has a root in $\mathbb{C} \setminus \mathbb{R}[\zeta_k]$.  To that end, we introduce the $k$-comb.

Let $\Comb_k$ be the $k$-graph where \[\Comb_k = ([k^2], \{[k] \cup\{\{i + tk : 0 \leq t \leq k-1\} : i \in [k]\}\}\}.\]

We refer to $\Comb_k$ as the $k$-comb.  By the definition of power tree, a non-power tree $H$ must contain an edge $e$ incident to a family $\mathcal{F}$ consisting of at least three other edges which are mutually disjoint.  This edge $e$, together with $\mathcal{F}$, form a connected induced subgraph $H^\prime$ of $H$ which is the $k^\textrm{th}$ power of a $t$-comb for $t = |\mathcal{F}| \geq 3$.  It is straightforward to see that $\varphi(H^\prime)(x) = \varphi(\Comb_t^k)(x) = \varphi(\Comb_t)(x^{k/t})$, since matchings in $H^\prime$ are simply $k^\textrm{th}$ powers of matchings in $\Comb_t$; therefore, roots of $\varphi(H^\prime)$ are $k^\textrm{th}$ roots of reals if and only if the roots of $\varphi(\Comb_t)$ are $t^\textrm{th}$ roots of reals.  We presently show that the spectrum of the $k$-comb is not contained within the $k^\textrm{th}$ cyclotomic extension of $\mathbb{R}$, completing the proof of Theorem \ref{t: main3}.

\begin{lemma}
There exists a root $\lambda$ of $\varphi(\Comb_k)$ for $k \geq 3$ such that $\lambda \in \mathbb{C} \setminus \mathbb{R}[\zeta_i]$.
\end{lemma}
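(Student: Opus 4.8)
The plan is to compute the matching polynomial $\varphi(\Comb_k)$ explicitly, observe that it has a small number of nonzero monomials so that the problem reduces to understanding roots of a low-degree polynomial in $x^k$ (after a substitution), and then exhibit a root lying outside $\mathbb{R}[\zeta_k]$. Recall $\Comb_k$ consists of one ``spine'' edge $[k]$ together with $k$ mutually disjoint ``tooth'' edges, each meeting the spine in exactly one vertex; it has $k^2$ vertices and matching number $m = k$ (take all $k$ teeth). First I would enumerate the $i$-matchings: an $i$-matching either uses the spine edge or not. If it does not use the spine, it is any choice of $i$ of the $k$ teeth, contributing $\binom{k}{i}$; if it does use the spine, it blocks all $k$ teeth (each tooth meets the spine), so the only spine-containing matching has size $1$. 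Hence $|\mathcal{M}_0| = 1$, $|\mathcal{M}_1| = k+1$, and $|\mathcal{M}_i| = \binom{k}{i}$ for $2 \le i \le k$. Therefore
\[
\varphi(\Comb_k)(x) \;=\; \sum_{i=0}^{k} (-1)^i |\mathcal{M}_i|\, x^{(k-i)k} \;=\; \Bigl(\sum_{i=0}^{k}(-1)^i \binom{k}{i} x^{(k-i)k}\Bigr) \;-\; x^{(k-1)k} \;=\; (x^k - 1)^k \;-\; x^{k(k-1)},
\]
using the binomial theorem on the first sum. Substituting $y = x^k$ gives $(y-1)^k - y^{k-1}$, so the roots of $\varphi(\Comb_k)$ are the $k$-th roots of the roots of $p_k(y) := (y-1)^k - y^{k-1}$.

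Next I would analyze $p_k$. Its roots cannot all be real: I expect $p_k$ to have complex roots for $k \ge 3$ (this can be seen, e.g., by a discriminant/Descartes-type argument, or by noting $p_k(y) = (y-1)^k - y^{k-1}$ has at most a couple of sign changes in its coefficient sequence while having degree $k$, forcing non-real roots once $k \ge 3$). Pick a non-real root $\mu$ of $p_k$; then any $k$-th root $\lambda$ of $\mu$ is a root of $\varphi(\Comb_k)$. To finish, I must show $\lambda \notin \mathbb{R}[\zeta_k]$. The key point is that $\mathbb{R}[\zeta_k]$ consists exactly of numbers whose $k$-th power, suitably interpreted, is real — more precisely, every element of $\mathbb{R}[\zeta_k]$ is of the form $r\zeta_k^j$ only in the degenerate cases; in general one argues: if $\lambda \in \mathbb{R}[\zeta_k]$ then so are all its Galois conjugates over $\mathbb{Q}$, and $\lambda^k = \mu$; the set $\{z : z^k \in \mathbb{R}\}$ is the union of the $2k$ rays $\mathbb{R}\cdot\zeta_{2k}^j$, which is exactly $\mathbb{R}[\zeta_k]$ intersected with... — I need to be careful here. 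The cleaner route: $\mathbb{R}[\zeta_k] = \{z \in \mathbb{C} : z^k \in \mathbb{R}\}$ is \emph{false} in general (e.g. $1+\zeta_k$), so instead I would use that $\lambda^k = \mu \notin \mathbb{R}$, hence $\mu \notin \mathbb{R}[\zeta_k] \cap \mathbb{R}$... this also isn't immediate. The correct observation is: $\mathbb{R}[\zeta_k]$ is a field, and $\mu = \lambda^k \in \mathbb{R}[\zeta_k]$ would follow from $\lambda \in \mathbb{R}[\zeta_k]$; but $\mu$ is a non-real algebraic number, and one must check $\mu \notin \mathbb{R}[\zeta_k]$. This holds because $\mathbb{R}[\zeta_k] = \mathbb{R}(\cos(2\pi/k), i\sin(2\pi/k))$ has degree $2$ over its maximal real subfield $\mathbb{R}$-span... — the honest statement is that $\mathbb{R}[\zeta_k]$ is either $\mathbb{R}$ (if $k \le 2$) or a degree-$2$ extension $\mathbb{R}(\zeta_k)$ of $\mathbb{R}$, i.e. $\mathbb{R}[\zeta_k] = \mathbb{C}$ when $\zeta_k \notin \mathbb{R}$. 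So for $k \ge 3$, $\mathbb{R}[\zeta_k] = \mathbb{C}$ and the statement as literally written is vacuous or must be reinterpreted.

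Given this subtlety, I suspect the intended object is $\bigcup_j \mathbb{R}\zeta_k^j$ (the ``cyclotomic rays''), or equivalently $\{z : z^k \in \mathbb{R}\}$, matching the phrase ``$k$-th roots of reals'' used earlier in the section; I would adopt that reading. Under it, the claim becomes: some root $\lambda$ of $\varphi(\Comb_k)$ has $\lambda^k \notin \mathbb{R}$, which is immediate from the paragraph above since $\lambda^k = \mu$ is a non-real root of $p_k(y) = (y-1)^k - y^{k-1}$. Thus the entire argument reduces to the single concrete fact that $(y-1)^k - y^{k-1}$ has a non-real root for every $k \ge 3$. I would prove that directly: if all roots were real, then since the polynomial is monic of degree $k$ with $p_k(0) = (-1)^k \ne 0$ and $p_k(1) = -1 < 0$, and $p_k(y) \to +\infty$ as $y \to \pm\infty$ (for $k$ even) or $p_k(y)\to\pm\infty$ (for $k$ odd), a count of real sign changes and critical points of $p_k'(y) = k(y-1)^{k-1} - (k-1)y^{k-2}$ — which itself factors through $(y-1)^{k-1}/y^{k-2} = (k-1)/k$, giving few real critical points — contradicts having $k$ real roots once $k \ge 3$. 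The main obstacle, then, is not the hypergraph combinatorics (which collapses cleanly to $(x^k-1)^k - x^{k(k-1)}$) but rather pinning down the precise meaning of $\mathbb{R}[\zeta_k]$ versus $\mathbb{R}[\zeta_i]$ in the statement and then ruling out that $p_k$ is real-rooted; I would handle the latter by the elementary calculus/Descartes argument just sketched, or alternatively by computing $p_k$ for small $k$ and invoking a continuity/degree argument for general $k$.
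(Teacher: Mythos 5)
Your computation of $\varphi(\Comb_k)$ and the reduction to a degree-$k$ polynomial in $y=x^k$ is exactly the paper's route (the paper sets $\alpha=x^k$ and gets $(1-\alpha)^k-\alpha^{k-1}$, with a harmless sign slip for odd $k$ that your version $(x^k-1)^k-x^{k(k-1)}$ corrects). Your reading of the statement is also the intended one: the paper uses $\mathbb{R}[\zeta_k]$ to mean real multiples of $k$th roots of unity (equivalently, $\lambda^k\in\mathbb{R}$), not the ring generated by $\mathbb{R}$ and $\zeta_k$, which would indeed be all of $\mathbb{C}$; your criticism of the notation is fair, and the surrounding text (``$k$th roots of reals'') confirms the intended meaning. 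So the whole lemma does come down to the single fact that $p_k(y)=(y-1)^k-y^{k-1}$ has a non-real root for $k\ge 3$.

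That last step is where your proposal has a genuine gap. The argument you sketch --- ``$p_k'$ has few real critical points, contradicting $k$ real roots'' --- does not close even the first case $k=3$: there $p_3'(y)=3y^2-8y+3$ has two real roots, so Rolle alone permits three real roots of $p_3$, which is not fewer than $k$. You must additionally evaluate the sign of $p_k$ at the relevant critical point (e.g.\ check that the local maximum value is negative), or better, make the reciprocal substitution $\beta=1/y$, which turns $p_k(y)=0$ into $(1-\beta)^k=\beta$. For odd $k$ the function $\beta\mapsto(1-\beta)^k-\beta$ has everywhere negative derivative $-k(1-\beta)^{k-1}-1$, hence exactly one real zero; for even $k$ the derivative $-k(1-\beta)^{k-1}-1$ vanishes exactly once, giving at most two real zeros. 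Either count is strictly less than $k\ge 3$, so a non-real $\beta$ (hence non-real $y=1/\beta$, hence a root $\lambda$ of $\varphi(\Comb_k)$ with $\lambda^k\notin\mathbb{R}$) exists. This is essentially what the paper does with its substitution $\beta=\alpha^{-1}$ and the equation $(\beta-1)^k=\beta$, though it too dispatches the root count with ``it is easy to see.'' Your Descartes-type alternative is also salvageable (e.g.\ $p_k(-y)$ has no sign changes, so all real roots are positive), but as written neither of your fallbacks is a complete argument.
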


\begin{proof}
Let ${\cal H}$ be a $k$-comb where $k \geq 3$.  By a simple counting argument, 
\[
\varphi({\cal H}) = \left(\sum_{i=0}^k (-1)^i \binom{k}{i} \alpha^{k-i}\right) - \alpha^{k-1}
\] 
where $\alpha = x^k$.  Appealing to the binomial theorem we have \[\varphi({\cal H}) = (1-\alpha)^k - \alpha^{k-1}.\]  Let $\beta = \alpha^{-1}$.  Setting $\varphi({\cal H}) = 0$ yields 
\begin{equation} \label{eq1}
(\beta -1)^k = \beta.
\end{equation}
It is easy to see that (\ref{eq1}) has precisely one solution when $k\geq 3$ is odd and precisely two solutions when it is even.  In either case, as the number of solutions is strictly less than $k$ it follows that there must be a non-real solution and the claim follows. 
\end{proof}

\section{Concluding Remarks}

We conclude our note by presenting an example demonstrating Theorems \ref{t: main2} and \ref{t: main3}.

Consider the 3-uniform hypergraphs
\begin{align*}
{\cal H}_1 &=([9], \{\{1,2,3\},\{1,4,7\},\{2,5,8\},\{3,6,9\}\}) = \Comb_3 \\
{\cal H}_2 & = ([9], \{\{1,2,3\},\{1,4,7\},\{3,6,9\},\{1,10,11\}\})\\
{\cal H}_3&= ([11], E({\cal H}_1) \cup E({\cal H}_2)).
\end{align*}

We have computed 
\begin{align*}
\phi({\cal H}_1) &= x^{567}(x^9-4x^6+3x^3-1)^{81}(x^6-3x^3+1)^{81}(x^3-2)^{27}(x^3-1)^{147} \\
\phi({\cal H}_2) &=x^{999}(x^6-4x^3+2)^{81}(x^6-3x^3+1)^{54}(x^3-3)^{27}(x^3-2)^{63}(x^3-1)^{75} \\
\phi({\cal H}_3) &=x^{3767}(x^9-5x^6+5x^3-2)^{243}(x^9-4x^6+3x^3-1)^{162}(x^6-4x^3+2)^{162}\\
&\qquad \cdot (x^6-3x^3+1)^{135}(x^3-3)^{27}(x^3-2)^{180}(x^3-1)^{483}.
\end{align*}  

Let $P_n$ and $S_n$ denote the 3-uniform loose path and star with $n$ edges, respectively.  We list the non-trivial induced subgraphs of ${\cal H}_3$ and their matching polynomials below.

\begin{center}
\label{t: subgraphs}
\begin{tabular}{|c|c|}
\hline
 $H \sqsubseteq {\cal H}_3$ & $\varphi(H)$ \\
 \hline 
 $P_1 = S_1$ & $x^3-1$ \\ \hline
  $P_2 = S_2$ & $x^3-2$ \\ \hline
$P_3$  & $x^6-3x^3+1$ \\ \hline
$S_3$ & $x^3-3$ \\ \hline
${\cal H}_1$ & $x^9-4x^6+3x^3-1$\\ \hline
 ${\cal H}_2$ & $x^6-4x^3+2$ \\ \hline
 ${\cal H}_3$ & $x^9-5x^6+5x^3-2$\\ \hline
\end{tabular}
\end{center}

Figure \ref{fig1} gives a drawing of ${\cal H}_3$ (the striped subgraph is ${\cal H}_1$) and a plot of the roots of $\phi({\cal H}_3)$, with a circle centered at each root in the complex plane whose area is proportional to the multiplicity of the root.  Notice that, despite the rotational symmetry (turning by a third), the cubes of the roots are not all real, i.e., some of the roots do not lie on the the rays with argument $0$, $2\pi/3$, or $4 \pi/3$.

\begin{figure}[h]
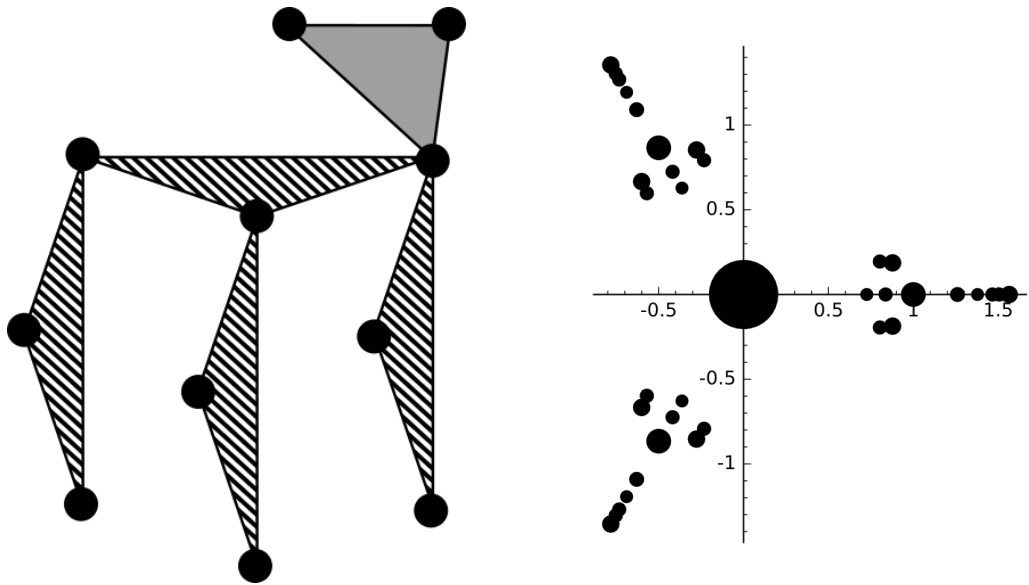

\centering
\begin{minipage}{.5\textwidth}
 \centering
\includegraphics[width=.8\linewidth]{fork-hypergraph.png}
\end{minipage}%
\begin{minipage}{.5\textwidth}
 \centering
 \includegraphics[width=.8\linewidth]{rootplot.png}
\end{minipage}
\caption{\label{fig1} ${\cal H}_3$ and its spectrum.}
\end{figure}

Observe that each matching polynomial divides wholly into the characteristic polynomials.  A priori, this is a symptom of the matching polynomials having distinct roots.  A preliminary question: what can one say about the roots of $\varphi(H_1)$ and $\varphi(H_2)$ for $k$-trees $H_1 \sqsubseteq H_2$?  With this question in mind we conjecture the following.

\begin{conjecture}
If  $H \sqsubseteq {\cal H}$ are $k$-trees for $k \geq 3$ then $\varphi(H) \mid \phi({\cal H})$.  In particular, if $H \sqsubseteq {\cal H}$ then $\phi(H) \mid \phi({\cal H})$.
\end{conjecture}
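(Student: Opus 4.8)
The plan is to deduce the conjecture from a product formula for the characteristic polynomial of a hypertree. Write $s(\mathcal H)$ for the set of sub-hypertrees of $\mathcal H$ as above, and note that an induced sub-hypertree $H\sqsubseteq\mathcal H$ belongs to $s(\mathcal H)$ and that $s(H)\subseteq s(\mathcal H)$. The goal is to establish that there are exponents $c_0(\mathcal H)\ge 0$ and $c_{H'}(\mathcal H)\ge 1$ for $H'\in s(\mathcal H)$ with
\[
\phi(\mathcal H)(x)\;=\;x^{\,c_0(\mathcal H)}\!\!\prod_{H'\in s(\mathcal H)}\!\!\varphi(H')(x)^{\,c_{H'}(\mathcal H)},
\]
and, in addition, that these exponents are monotone under inclusion: $c_0(\mathcal H)\ge c_0(H)$ and $c_{H''}(\mathcal H)\ge c_{H''}(H)$ whenever $H''\in s(H)$ and $H\sqsubseteq\mathcal H$. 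Granting this, both parts of the conjecture follow at once. For $H\sqsubseteq\mathcal H$ we have $H\in s(\mathcal H)$ and $c_H(\mathcal H)\ge 1$, so every irreducible factor $p$ of $\varphi(H)$ occurs in $\phi(\mathcal H)$ with multiplicity at least its multiplicity in $\varphi(H)$, whence $\varphi(H)\mid\phi(\mathcal H)$. For the ``in particular'' clause, note that each $\varphi(H')$ has nonzero constant term $(-1)^{m(H')}|{\cal M}_{m(H')}|$, so it contributes nothing to the multiplicity of $x$; comparing the product formulas for $\phi(H)$ and $\phi(\mathcal H)$ factor by factor, using $s(H)\subseteq s(\mathcal H)$ and the monotonicity of the exponents, gives $\phi(H)\mid\phi(\mathcal H)$.

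To obtain the formula I would refine the analysis underlying Theorem~\ref{t:main}: a Harary--Sachs-type expansion writes each coefficient of $\phi(\mathcal H)$ as a weighted sum over certain multi-sub-hypergraphs of $\mathcal H$, and acyclicity severely restricts which configurations occur, forcing each contributing configuration to decompose over a disjoint union of sub-hypertrees and, within each sub-hypertree, to be assembled from matchings --- precisely the mechanism that makes $\varphi$ appear in the spectrum. Resumming the expansion one sub-hypertree at a time should collapse it into the displayed product, with $c_{H'}(\mathcal H)$ given combinatorially as a nonnegative count of the ways $H'$ sits inside $\mathcal H$. Positivity of each $c_{H'}(\mathcal H)$ should come from the occurrence of $H'$ as a sub-hypertree of itself contributing a strictly positive term, and the monotonicity from the fact that every occurrence contributing to $c_{H''}(H)$ also contributes to $c_{H''}(\mathcal H)$.

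A more hands-on alternative is induction on pendant edges. By Claim~\ref{c: tree} it suffices to relate $\phi(\mathcal H)$ to $\phi(\mathcal H-e)$ for a single pendant edge $e=\{v_1,\dots,v_{k-1},u\}$ with $\deg v_i=1$: the eigen-equations at the $v_i$ read $\lambda x_{v_i}^{k-1}=x_u\prod_{j\ne i}x_{v_j}$, so $x_{v_1},\dots,x_{v_{k-1}}$ can be eliminated from the polynomial system defining $\phi$. Carrying this out at the level of resultants --- and using the disjoint-union formula of \cite{Coo} to strip off the $k-1$ vertices that become isolated --- should express $\phi(\mathcal H)$ as an explicit power of $\phi\big(\mathcal H[V(\mathcal H)\setminus\{v_1,\dots,v_{k-1}\}]\big)$ times a further factor, giving $\phi(H)\mid\phi(\mathcal H)$ inductively. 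Combined with $\varphi(H)\mid\phi(H)$ --- which, granting that $\varphi(H)$ is squarefree for $k\ge 3$, equivalently that $\mu_H(y)=\sum_i(-1)^i|{\cal M}_i|y^{m-i}$ has simple roots (its constant term being already nonzero), reduces by Theorem~\ref{t: main2} to containment of root sets --- this again yields the conjecture.

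The main obstacle is that the content is entirely about \emph{multiplicities}: Theorem~\ref{t: main2} already pins down the root set of $\phi(\mathcal H)$, so everything rests on showing each root of $\varphi(H)$ appears in $\phi(\mathcal H)$ with multiplicity at least its multiplicity in $\varphi(H)$. In the Harary--Sachs approach this is exactly the positivity and monotonicity of the exponents $c_{H'}$, that is, ruling out any cancellation among the contributing configurations that could drop an exponent to $0$ --- a delicate sign-control problem. In the pendant-edge approach it is pinning down the exact power with which $\phi\big(\mathcal H[V(\mathcal H)\setminus\{v_1,\dots,v_{k-1}\}]\big)$ divides $\phi(\mathcal H)$, where the resultant computation introduces powers of $k-1$ that must be tracked precisely. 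Either way, the decisive ingredient is a workable formula for the exponents in the factorization of $\phi$ over a hypertree, which does not seem to be available in the current literature; the worked example of Section~4, in which every listed matching polynomial does divide $\phi(\mathcal H_3)$, is consistent with the existence of such a formula.
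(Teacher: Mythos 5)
This statement is presented in the paper only as a \emph{conjecture}: the authors offer no proof, so there is no argument of theirs to compare yours against, and the only question is whether your proposal actually closes the problem. It does not, and you say as much yourself. Everything is made to rest on a factorization $\phi(\mathcal H)(x)=x^{c_0(\mathcal H)}\prod_{H'\in s(\mathcal H)}\varphi(H')(x)^{c_{H'}(\mathcal H)}$ with every $c_{H'}(\mathcal H)\geq 1$ and with exponents monotone under passing to sub-hypertrees, and neither of your two routes to that factorization is carried out. To your credit, the reduction from the conjecture to that factorization is sound: $\varphi(H)\mid\phi(\mathcal H)$ is immediate from $H\in s(\mathcal H)$ and $c_H(\mathcal H)\geq 1$, and $\phi(H)\mid\phi(\mathcal H)$ follows from $s(H)\subseteq s(\mathcal H)$, the monotonicity of the exponents, and the (correct) observation that each $\varphi(H')$ has nonzero constant term. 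The target is also well chosen --- it is exactly the shape of the computed factorizations of $\phi(\mathcal H_1)$, $\phi(\mathcal H_2)$, $\phi(\mathcal H_3)$ in the concluding section, where the only factors besides $x$ are matching polynomials of induced subtrees, each with positive exponent.

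The genuine gap is the one you flag at the end: Theorem \ref{t: main2} controls only the root \emph{set} of $\phi(\mathcal H)$, and nothing proved in the paper controls multiplicities. In your Harary--Sachs route the missing step is showing that the signed sum over contributing configurations cannot cancel an exponent $c_{H'}$ down to zero, and that every configuration counted for $c_{H''}(H)$ survives as a configuration for $c_{H''}(\mathcal H)$; in your resultant route it is computing the exact power to which eliminating the $k-1$ degree-one vertices of a pendant edge inflates each factor. Neither is a routine verification, and the auxiliary claim your second route leans on --- that $\varphi(H)$ is squarefree for $k\geq 3$, equivalently that $\mu_H(y)=\sum_i(-1)^i|{\cal M}_i|y^{m-i}$ has simple roots --- is itself unproved here; the graph-case analogue is the Heilmann--Lieb/Godsil simplicity of matching-polynomial roots for trees, and it does not transfer automatically to hypertrees (indeed the paper's own computation for $\Comb_k$ shows these roots need not even be real). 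As it stands, then, your write-up is a plausible research plan whose conclusion is consistent with all the data in the paper, but it is not a proof, and the conjecture remains open.
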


\end{document}